\newtheorem{thm}{Theorem}[section]
\newtheorem{lem}[thm]{Lemma}
\newtheorem{prop}[thm]{Proposition}
\newtheorem{problem}[thm]{Problem}
\theoremstyle{definition}
\theoremstyle{remark}
\numberwithin{equation}{section}
\newcommand{\BBR}{\mathbb R}
\DeclareMathSymbol{\emptyset}{\mathord}{AMSb}{"3F}
\DeclareMathSymbol{\preccurlyeq}{\mathrel}{AMSa}{"34}
\newcommand{\cc}{\mathop{\mathrm{cc}}}
\newcommand{\conv}{{\mathbf{Conv}}}
\newcommand{\comp}{{\mathbf{Comp}}}
\newcommand{\cone}{\mathop{\mathrm{cone}}}
\newcommand{\pr}{\mathop{\mathrm{pr}}}
\def\w{\mathrm{w}}
\def\CS{\mathcal{S}}
\begin{document}

\title[]{Hyperspace of convex compacta of nonmetrizable compact convex subspaces of locally convex spaces}
\author{Lidia Bazylevych}
\address{National University ``Lviv Polytechnica", 12 Bandery Str.,
79013 Lviv, Ukraine} \email{izar@litech.lviv.ua}

\author{Du\v san Repov\v s}
\address{Institute of Mathematics, Physics and Mechanics, and
Faculty of Education,
University of Ljubljana,
P.O.Box 2964,
Ljubljana, Slovenia 1001}
\email{dusan.repovs@guest.arnes.si}

\author{Michael Zarichnyi}
\address{Department of Mechanics and Mathematics,
Lviv National University, Universytetska Str. 1, 79000 Lviv,
Ukraine}
\address{Institute of Mathematics,
University of Rzesz\'ow, Rejtana 16 A, 35-310 Rze\-sz\'ow, Poland}
\email{topology@franko.lviv.ua}
\thanks{The research was supported by the Slovenian-Ukrainian grant
BI-UA/04-06-007. The authors are indebted to the referee for valuable remarks.}
\subjclass[2000]{54B20, 54C55, 46A55} \keywords{Compact convex
set, hyperspace, Tychonov cube, soft map}
\date{\today}
\dedicatory{}
\commby{}


\begin{abstract} Our main result states that the hyperspace of
convex compact subsets of a compact convex subset $X$ in a locally
convex space is an absolute retract if and only if $X$ is an
absolute retract of weight $\le\omega_1$. It is also proved that
the hyperspace of convex compact subsets of the Tychonov cube
$I^{\omega_1}$ is homeomorphic to $I^{\omega_1}$. An analogous
result is also proved for the cone over $I^{\omega_1}$. Our proofs
are based on analysis of maps of hyperspaces of compact convex
subsets, in particular, selection theorems for such maps are
proved.
\end{abstract}

\maketitle

\section{Introduction}

For any uncountable cardinal number $\tau$, the Tychonov and the Cantor cubes (denoted by $I^\tau$ and $D^\tau$,
respectively), belong to
the class of main geometric objects in the topology of
non-metrizable compact Hausdorff spaces. The spaces $I^\tau$ (we denote
$I$  the segment $[0,1]$) and $D^\tau$ were first
characterized by Shchepin \cite{S}. In particular, the
Tychonov cubes are characterized as the homogeneous-by-character
nonmetrizable compact Hausdorff absolute retracts \cite{S1}. This
characterization was later applied to the study of topology of the
{\em functor-powers}, i.e. spaces of the form $F(K^\tau)$, where
$K$ is a compact metrizable space and $F$ is a covariant functor
in the category of compact Hausdorff spaces. In particular, it was
proved that, for uncountable $\tau$,  the space $P(I^\tau)$, where
$P$ denotes the probability measure functor, is homeomorphic to
$I^\tau$ if and only if $\tau=\omega_1$. For the hyperspace
functor $\exp$ it is known that $\exp(D^\tau)$ is homeomorphic to
$D^\tau$ if and only if $\tau=\omega_1$ and $\exp(I^\tau)$ is not
an absolute retract whenever $\tau>\omega$.

 In this paper we consider the hyperspaces  $\cc(X)$ of  nonempty compact convex
subsets  in $X$ for compact convex subsets $X$ in locally convex
spaces.  For metrizable $X$, this object was investigated by
different authors (see, e. g. \cite{NQS}, \cite{M}). In
particular, it was proved in \cite{NQS} that the hyperspace of
convex compact subsets of the Hilbert cube $Q=I^\omega $ is
homeomorphic to $I^\omega $.

The aim of this paper is to consider the nonmetrizable compact
convex subsets in locally convex spaces. One of our main results
is Theorem \ref{t:1}, which characterizes the compact convex
spaces $X$ with $\cc(X)$ being an absolute retract. We also show
that the space $\cc(X)$ is homeomorphic to $I^{\omega_1}$
(resp.  the cone over $I^{\omega_1}$) if and only if $X$ is
homeomorphic to $I^{\omega_1}$ (resp. the cone over $I^{\omega_1}$).

These results are in the spirit of the corresponding results
concerning the functor-powers of compact metric spaces (see
\cite{S}). The proofs are based on the spectral analysis of
nonmetrizable compact Hausdorff spaces, in particular on the
Schepin Spectral Theorem \cite{S} as well as on analysis of the
selection type properties of the maps of the hyperspaces of
compact convex subsets.

The construction $\cc$ determines a functor acting on the category
$\conv$ of compact convex subsets of locally convex spaces. The
results of this paper  demonstrate that the functor $\cc $ is
closer to the functor $P$ of probability measures than to the
hyperspace functor $\exp$.

\section{Preliminaries}
All topological spaces are assumed to be Tychonov, all maps are
continuous. By $\bar A$ we denote the closure of a subset $A$ of a
topological space. Let $X$ be any space.

The {\it hyperspace\/} $\exp X$ of $X$ is the space of all
nonempty compact subsets in $X$ endowed with the  {\it
Vietoris\/} topology. A base of this topology is formed by the
sets of the form
$$
\langle U_1,\dots,U_n\rangle=\bigl\{A\in\exp X\mid A\subset
U_1\cup\dots\cup U_n\text{ and }A\cap U_i\neq\emptyset\text{ for every
}i\bigr\},
$$
where $U_1,\dots,U_n$ run through the topology of $X$,
$n\in\mathbb N$. For a metric space $(X,\rho)$ the Vietoris
topology on $\exp(X)$ is induced by the Hausdorff metric
$\rho_{\mathrm{H}}$:
$$\rho_{\mathrm{H}}(A,B)=\inf\{\varepsilon>0
\mid A\subset O_{\varepsilon}(B),\ B\subset O_{\varepsilon}(A)\}.$$

The hyperspace construction determines a functor in the category
$\comp$ of compact Hausdorff spaces and continuous maps. Given a
map $f\colon X\to Y$ in $\comp$, we define $\exp(f)\colon
\exp(X)\to\exp(Y)$ by $\exp(f)(A)=f(A)$, $A\in \exp(X)$.

Let $\conv$ denote the category of compact convex subsets in
locally convex spaces and affine continuous maps. If $X$ is an
object of $\conv$ we define $$\cc(X)=\{A\in\exp(X)\mid A\mbox{ is
convex}\}\subset\exp(X).$$ If $f\colon X\to Y$ is a map in
$\conv$, then the map $\cc(f)\colon \cc(X)\to \cc(Y)$ is defined
as the restriction of $\exp(f)$ on $\cc(X)$.

In the sequel, for a nonempty compact subset $X$ in a locally
convex space $Y$,  we denote
the~closed convex hull map
by $h\colon \exp X\to\cc(Y)$.
Let $X$ be a subset of a metrizable locally convex space $M$. In
the sequel, we identify any point $x\in X$ with the singleton
$\{x\}\in\cc(X)$.

Recall that the {\it Minkowski operation\/} in $\cc(X)$ is defined
as follows:
$$\lambda_1A_1+\lambda_2A_2=\{\lambda_1x_1+\lambda_2x_2\mid x_1\in
A_1,\ x_2\in A_2\},$$ $\lambda_1,\lambda_2\in\BBR$,
$A_1,A_2\in\cc(X)$.

\begin{lem} Let $X$ be a  compact convex subset in a
locally convex space. There exists an~embedding $\alpha$ of the
space $\cc(X)$ into a~locally convex space $L$ satisfying the
condition
\begin{equation}\label{f:emb}
 \alpha(\lambda_1A_1+\lambda_2A_2)=\lambda_1\alpha(A_1)+\lambda_2\alpha(A_2)
\end{equation}
for every $\lambda_1,\lambda_2\in\BBR$, $A_1,A_2\in\cc(X)$.
\end{lem}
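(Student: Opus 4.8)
The plan is to use the classical \emph{support function} embedding. Let $Y$ be a locally convex space with $X\subseteq Y$, and let $Y^{*}$ be its topological dual. Take for $L$ the locally convex space $\BBR^{Y^{*}}$ (a product of copies of the real line, with the Tychonov topology) and define $\alpha\colon\cc(X)\to L$ by
\begin{equation*}
 \alpha(A)(\varphi)=\max\{\varphi(a)\mid a\in A\},\qquad \varphi\in Y^{*},\ A\in\cc(X).
\end{equation*}
The maximum is attained because $A$ is compact and $\varphi$ continuous, so $\alpha$ is well defined, and $L$ is obviously locally convex.

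First I would check continuity of $\alpha$: since $L$ carries the product topology it suffices to see that for each fixed $\varphi\in Y^{*}$ the function $A\mapsto\max\varphi(A)$ is Vietoris-continuous on $\cc(X)$, which is standard, as $\{A\mid\max\varphi(A)<c\}=\langle\varphi^{-1}(-\infty,c)\rangle$ and $\{A\mid\max\varphi(A)>c\}=\langle X,\varphi^{-1}(c,\infty)\rangle$ are Vietoris-open for every $c\in\BBR$. Next, the identity (\ref{f:emb}) is a direct computation exploiting linearity of each $\varphi\in Y^{*}$: for the relevant (nonnegative) coefficients one separately maximizes over $x_1$ and $x_2$, so that
\begin{align*}
 \alpha(\lambda_1A_1+\lambda_2A_2)(\varphi)
 &=\max\{\lambda_1\varphi(x_1)+\lambda_2\varphi(x_2)\mid x_1\in A_1,\ x_2\in A_2\}\\
 &=\lambda_1\max\varphi(A_1)+\lambda_2\max\varphi(A_2)
  =\bigl(\lambda_1\alpha(A_1)+\lambda_2\alpha(A_2)\bigr)(\varphi),
\end{align*}
and since $\varphi$ is arbitrary this yields $\alpha(\lambda_1A_1+\lambda_2A_2)=\lambda_1\alpha(A_1)+\lambda_2\alpha(A_2)$ in $L$ (in particular $\alpha$ preserves convex combinations, where one may even replace $\varphi$ by any continuous affine function on $X$).

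It remains to see that $\alpha$ is a topological embedding. For injectivity, suppose $\alpha(A)=\alpha(B)$ and pick $y\in X\setminus A$; by the Hahn--Banach separation theorem applied to the disjoint compact convex sets $\{y\}$ and $A$ there is $\varphi\in Y^{*}$ with $\varphi(y)>\max\varphi(A)=\alpha(A)(\varphi)=\alpha(B)(\varphi)=\max\varphi(B)$, so $y\notin B$; by symmetry $A=B$. Finally, $\cc(X)$ is a closed subset of the compact space $\exp X$ — convexity of a compact $A$ amounts to $\tfrac12(A+A)\subseteq A$, and $A\mapsto\tfrac12(A+A)$ is Vietoris-continuous while $\{(C,A)\mid C\subseteq A\}$ is Vietoris-closed — hence $\cc(X)$ is compact; a continuous injection of a compact space into a Hausdorff space is a homeomorphism onto its image, so $\alpha$ is the desired embedding.

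The step I expect to demand the most care is precisely this last point, that $\alpha$ is a \emph{homeomorphic} (not merely continuous injective) embedding: in the nonmetrizable setting the Hausdorff metric is unavailable, so the argument must rest on compactness of $\cc(X)$ together with the separation property used in the injectivity step — and it is essential there to index $L$ by the \emph{full} dual $Y^{*}$, since a point-separating subfamily of $Y^{*}$ need not separate a point from a compact convex set. An alternative would be to invoke R{\aa}dstr{\"o}m's embedding theorem for the monoid $(\cc(Y),+)$, but the support-function construction above is self-contained.
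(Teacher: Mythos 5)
Your construction is correct, but it takes a genuinely different route from the paper's. The paper follows Pinsker's (R{\aa}dstr{\"o}m-type) construction: it forms the space $L$ of formal differences, i.e.\ equivalence classes $[A,B]$ of pairs in $\cc(M)\times\cc(M)$ under $(A,B)\sim(C,D)$ iff $A+D=B+C$, topologizes $L$ by the sets $U^\ast=\{[A,B]\mid A\subset B+U,\ B\subset A+U\}$, and puts $\alpha(A)=[A,\{0\}]$, leaving the verification that this is a linear topological embedding to the cited literature (and, as written, the paper's proof even assumes the ambient space metrizable, though nothing essential depends on that). You instead use the support-function map into the product $\BBR^{Y^\ast}$, checking continuity on the Vietoris subbasis, additivity and positive homogeneity of $A\mapsto\max\varphi(A)$, injectivity via Hahn--Banach separation of a point from a compact convex set, and compactness of $\cc(X)$ (closedness in $\exp X$) to upgrade the continuous injection to an embedding; this is self-contained where the paper argues by reference, and it works directly in the nonmetrizable setting. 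The one caveat, which you correctly flag, is that the support function satisfies (\ref{f:emb}) only for $\lambda_1,\lambda_2\ge 0$, since $\max\varphi(\lambda A)=\lambda\min\varphi(A)$ for $\lambda<0$; but this is not a defect relative to the paper, because the Pinsker embedding has exactly the same limitation (for $\lambda<0$ one has $\lambda[A,\{0\}]=[\{0\},|\lambda|A]\neq[\lambda A,\{0\}]$ unless $A$ is a singleton), so the lemma's ``for every $\lambda_1,\lambda_2\in\BBR$'' must in any case be read as referring to nonnegative (affine/convex) combinations---which is all that is used later, in the proof of Proposition \ref{p:soft}, where only convexity of the image and the topological embedding matter. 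What the paper's route buys in exchange is a linear-space receptacle for formal differences whose topology directly mirrors the Hausdorff-type uniformity on $\cc(M)$; for the lemma as applied, your construction suffices.
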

\begin{proof} Let $X$ be a compact convex subset in a metrizable
locally convex space $M$. Following \cite{P}, consider the
equivalence relation $\sim$ on $\cc(M)\times\cc(M)$ defined by the
condition: $(A,B)\sim(C,D)$ if and only if $A+D=B+C$. Denote by $L$
the set of equivalence classes of $\sim$ (in the sequel, we denote
by $[A,B]$ the equivalence class that contains $(A,B)$). It is
well-known that $L$ is a linear space with respect to the naturally
defined operations. Let $U$ be a convex neighborhood of zero in
$M$ and define
$$U^\ast=\{[A,B]\in L\mid A\subset B+U,\ B\subset A+U\}.$$
The sets
$U^\ast$ form a base at
zero in $L$. The map $\alpha\colon\cc(X)\to
L$ defined by the formula $\alpha(A)=[A,\{0\}]$ is the
required
embedding.
\end{proof}

\section{Functor $\cc$ and soft maps}

A map $f\colon X\to Y$ is {\it soft} (see \cite{S}) if for every
commutative diagram $$\xymatrix{A \ar[r]^{\psi}\ar[d]_i &
X\ar[d]^{f} \\ Z\ar[r]_{\varphi} & Y,}$$ where $i\colon A\to Z$ is
a closed embedding into a paracompact space $Z$, there exists a
map $\Phi\colon Z\to X$ such that $\Phi | A=\psi$ and
$f\Phi=\varphi$.

In other words, a map is soft if it satisfies the parameterized
selection extension property.

The following proposition is close to the Michael selection
theorem for convex-valued maps \cite{Mi}.
\begin{prop}\label{p:soft} Let $f\colon X\to Y$ be an~affine open map of compact
convex metrizable subsets of locally convex spaces. Then the~map
$\cc(f)\colon \cc(X)\to\cc(Y)$ is soft.
\end{prop}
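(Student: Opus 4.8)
The plan is to verify the parameterized selection extension property directly, using the Michael selection theorem as the engine. So suppose we are given a commutative diagram as in the definition of softness, with $i\colon A\to Z$ a closed embedding into a paracompact $Z$, $\psi\colon A\to\cc(X)$ and $\varphi\colon Z\to\cc(Y)$ satisfying $\cc(f)\psi=\varphi i$. I would first reduce to the case of an honest selection problem by considering, for each $z\in Z$, the set
\[
G(z)=\{B\in\cc(X)\mid \cc(f)(B)=\varphi(z),\ B\supset\psi(z)\text{ if }z\in A\}.
\]
More precisely, set $\Theta(z)=\{B\in\cc(X)\mid f(B)=\varphi(z)\}$ for $z\in Z$, and then intersect with the constraint coming from $A$. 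The map $\Phi$ we seek is exactly a continuous selection of $z\mapsto\Theta(z)$ that restricts to $\psi$ on $A$; the ``restricts to $\psi$ on $A$'' part is handled by the standard trick of first extending $\psi$ over a neighborhood (or better, absorbing it into the set-valued map, shrinking $\Theta(z)$ to $\{\psi(z)\}$ on $A$ and leaving it unchanged off a small neighborhood, interpolating in between) — this is routine once the selection machinery is in place.

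The substantive point is to check that $z\mapsto\Theta(z)\subset\cc(X)$ is a lower semicontinuous map into a suitable space with convex values, so that Michael's theorem applies. Here I would invoke the embedding $\alpha\colon\cc(X)\to L$ of the Lemma: since $\alpha$ is an affine embedding (it satisfies \eqref{f:emb}, in particular it takes Minkowski combinations to linear combinations), the Minkowski-convex subsets of $\cc(X)$ correspond to genuinely convex subsets of the locally convex space $L$. Now for fixed $C\in\cc(Y)$ the fiber $\{B\in\cc(X)\mid f(B)=C\}$ is convex with respect to the Minkowski structure: if $f(B_1)=f(B_2)=C$ then, $f$ being affine, $f(\lambda B_1+(1-\lambda)B_2)=\lambda f(B_1)+(1-\lambda)f(B_2)=C$ for $\lambda\in[0,1]$. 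Hence each $\Theta(z)$ maps under $\alpha$ to a convex (and closed, hence complete since $\cc(X)$ is compact metrizable) subset of $L$. So the only remaining hypothesis of Michael's theorem is lower semicontinuity of $z\mapsto\Theta(z)$.

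The main obstacle, then, is precisely the lower semicontinuity of $\Theta$, and this is where openness of $f$ enters (just as openness is what makes $\cc(f)$ surjective and is the natural hypothesis matching Michael's theorem). I would argue as follows: given $z_0$, $B_0\in\Theta(z_0)$, and a basic Vietoris neighborhood $\langle U_1,\dots,U_n\rangle$ of $B_0$ in $\cc(X)$, I need a neighborhood $W$ of $z_0$ in $Z$ so that for every $z\in W$ there is $B\in\Theta(z)$ with $B\in\langle U_1,\dots,U_n\rangle$. Pick points $x_j\in B_0\cap U_j$; since $f$ is open and affine, for $z$ near $z_0$ the convex set $\varphi(z)$ is close to $f(B_0)$, and using openness of $f$ one can lift: choose $x_j(z)\in U_j$ near $x_j$ with $f(x_j(z))\in\varphi(z)$, and more generally produce, for a suitably fine finite net of points of $B_0$, nearby preimages lying in the prescribed $U_j$'s and mapping into $\varphi(z)$; then take $B=h(\{x_1(z),\dots\}\cup(\text{a preimage of }\varphi(z)))$ or rather $B = $ the convex hull of a finite set chosen to both surject onto $\varphi(z)$ under $f$ and meet every $U_j$, adjusting so that $f(B)=\varphi(z)$ exactly (here one uses that preimages of convex sets under open affine maps of compacta behave well, plus continuity of the Minkowski/convex-hull operations). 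The bookkeeping to get $f(B)$ to equal $\varphi(z)$ on the nose, rather than merely approximately, is the delicate part; I expect to handle it by taking $B$ to be the convex hull of $f^{-1}(\varphi(z))$-valued choices together with enough points to capture all of $\varphi(z)$ in the image, exploiting that $f$ restricted to such a hull is onto $\varphi(z)$ by affineness. Once lower semicontinuity is established, Michael's selection theorem yields the continuous selection $\Phi$, and the preliminary reduction guarantees $\Phi|A=\psi$ and $f\Phi=\varphi$, proving $\cc(f)$ soft.
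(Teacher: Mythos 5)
Your overall architecture is viable and, in its last step, genuinely different from the paper's: you reduce to a selection problem for the convex-fibered map $z\mapsto\Theta(z)=\cc(f)^{-1}(\varphi(z))$ and invoke Michael's theorem, whereas the paper first extends $p$ over all of $Z$ using only that $\cc(X)$ is a compact convex metrizable set (hence an AR), and then pushes the extension into the fibers by the nearest-point projection in $\ell^2$; both mechanisms work once one knows that the fibers are convex (your Minkowski-convexity observation, via the embedding $\alpha$, is correct and is also what the paper uses) and that the fiber map is lower semicontinuous. Two small repairs to your setup: Michael's theorem needs the values to sit in a Banach (or Fr\'echet) space, and the space $L$ of the Lemma is merely locally convex, so you should, as the paper does, compose $\alpha$ with a further affine embedding of the compact metrizable set $\alpha(\cc(X))$ into $\ell^2$; and in the relative reduction no interpolation is needed -- the map equal to $\{\psi(z)\}$ on $A$ and to $\Theta(z)$ off $A$ is already lower semicontinuous provided $\Theta$ is, since $\psi(z)\in\Theta(z)$ on $A$.

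The genuine gap is exactly the step you yourself flag as ``the delicate part'': lower semicontinuity of $\Theta$, which is equivalent to openness of $\cc(f)$, and your sketch does not close it. Finitely many lifted points cannot produce $f(B)=\varphi(z)$ on the nose: by affinity, $f$ of the closed convex hull of a finite set is the closed convex hull of finitely many points of $Y$, which in general is a proper subset of the convex compactum $\varphi(z)$; on the other hand, throwing in all of $f^{-1}(\varphi(z))$ (or its hull) destroys Vietoris-closeness to $B_0$, since that preimage need not lie in $U_1\cup\dots\cup U_n$. What is actually needed is a uniform lifting statement: for every $\varepsilon>0$ there is a neighborhood of $\varphi(z_0)$ in $\cc(Y)$ such that every point of every $C$ in that neighborhood has an $f$-preimage $\varepsilon$-close to $B_0$ (this follows from openness of $f$ plus compactness of $B_0$ by a contradiction/subnet argument); with it one may take $B=h\bigl((f^{-1}(C)\cap\overline{O_\varepsilon(B_0)})\cup N\bigr)$, $N$ a lifted finite net of $B_0$, and check $f(B)=h(f(\cdot))=C$. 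The paper gets this much more cheaply, and you should adopt its argument: $\exp(f)$ is open, hence its restriction to the full preimage $(\exp(f))^{-1}(\cc(Y))$ is open onto $\cc(Y)$, and the closed convex hull map $h$ retracts that preimage onto $\cc(X)$ with $\cc(f)\circ h=\exp(f)|(\exp(f))^{-1}(\cc(Y))$ (here affinity of $f$ gives $f(h(B'))=h(f(B'))$); openness of $\cc(f)$, i.e. the lower semicontinuity you need, follows at once.
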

\begin{proof}
We first prove that the~map $\cc(f)$ is open. It is well-known that
the~map $\exp(f)$ is open. Since the diagram
$$\xymatrix{{(\exp(f))^{-1}(\cc(Y))}\ar[rr]^h
\ar[dr]_{{\exp(f)|(\exp(f))^{-1}(\cc(Y))}\ \ \ \ \ \ }&&{\cc(X)}\ar[dl]^{\cc(f)}\\
&{\cc(Y)}&}$$ is commutative and the closed convex hull map $h$ is
a retraction of $(\exp(f))^{-1}(\cc(Y))$ onto $\cc(X)$, we see
that the~map $\cc(f)$ is also open.

There exists an~embedding $\alpha\colon\cc(X)\to L$ satisfying
condition (\ref{f:emb}). Choose a~countable family of~functionals
$\{\varphi_1,\varphi_2,\dots\}\subset L^*$ such that this family
separates the points and
$\varphi_i(\alpha(\cc(X)))\subset[-1/i,1/i]$. Then the~map
$\varphi=(\varphi_1,\varphi_2,\dots)$, defined on
$\alpha(\cc(X))$, embeds $\alpha(\cc(X))$ into the~Hilbert space
$\ell^2$. Denote by $$\xi\colon
\varphi(\alpha(\cc(X)))\times\cc(\varphi(\alpha(\cc(X))))\to
\varphi(\alpha(\cc(X)))$$ the~nearest point map: $y=\xi(x,A)$ if
and only if $\|z-x\|>\|y-x\|$, for every $z\in A\setminus\{y\}$
(here $\|\cdot\|$ denotes the~norm in $\ell^2$).

Suppose a~commutative diagram
$$\xymatrix{A \ar[r]^{p}\ar@{^{(}->}[d] & {\cc(X)}\ar[d]^{\cc(f)} \\
Z\ar[r]_{q} & {\cc(Y)}}$$ is given, where $A$ is a closed subset of
a~paracompact space $Z$.

Since $\cc(X)$ is an~absolute retract, there exists a~map $r\colon
Z\to \cc(X)$ such that $r|A=p$. Note that for every $B\in\cc(Y)$,
the~set $\varphi(\alpha(\cc(f)^{-1}(B)))$ is a~convex closed subset
of $\varphi(\alpha(\cc(X)))$, i.e. an~element of the~space
$\cc(\varphi(\alpha(\cc(X))))$. Since the~map $\cc(f)$ is open,
the~map
$$\delta\colon\cc(Y)\to\cc(\varphi(\alpha(\cc(X)))),\
\delta(B)=\varphi(\alpha(\cc(f)^{-1}(B))),$$ is continuous.

Define the~map $R\colon Z\to \cc(X)$ by the~formula
$$R(z)=\alpha^{-1}(\varphi^{-1}(\xi(\varphi(\alpha (r(z))),\delta(q(z))))),\ z\in Z.$$

It is easy to see that $R$ is continuous, $R|A=p$, and $\cc(f)R=q$.
\end{proof}

A point $p$ of a set $X$ in a locally convex space $E$ is called
an {\em exposed point} of $X$ if there exists a continuous linear
functional $f$ on $E$ such that $f(x)> f(p)$, for each $x \in
X\setminus\{p\}$.

\begin{lem}\label{l:3}
Let $f\colon X\to Y$ be an open affine continuous map of compact
convex subsets in locally convex spaces such that $|f^{-1}(y)|>1$
for every $y\in Y$. Then $|\cc(f)^{-1}(B)|>1$, for every $B\in
\cc(Y)$.
\end{lem}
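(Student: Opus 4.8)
The goal is: given an open affine surjection $f\colon X\to Y$ with every fiber $f^{-1}(y)$ containing more than one point, show that for every $B\in\cc(Y)$ there is more than one convex compactum mapping onto $B$. The plan is to produce two distinct elements of $\cc(f)^{-1}(B)$. One obvious candidate is $f^{-1}(B)$ itself (a convex compactum since $f$ is affine and continuous, and clearly $\cc(f)(f^{-1}(B))=B$ because $f$ is surjective). So it suffices to produce a second convex compactum $C\subset f^{-1}(B)$ with $f(C)=B$ but $C\neq f^{-1}(B)$.

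First I would reduce to finding, for a single point $b\in B$, a proper convex compact subset $C_0$ of the fiber $f^{-1}(b)$; then I would try to ``spread'' this over $B$. Concretely, pick any $b\in B$ and any exposed point $p$ of the convex compactum $f^{-1}(b)$ (exposed points exist and their convex hull is dense, by the Straszewicz/Klee theorem in locally convex spaces); since $|f^{-1}(b)|>1$, the singleton $\{p\}$ is a proper subset of $f^{-1}(b)$. Let $\ell$ be a continuous linear functional on the ambient space exposing $p$ in $f^{-1}(b)$. Now for each $y\in Y$ set $C(y)=\{x\in f^{-1}(y)\mid \ell(x)=\min \ell(f^{-1}(y))\}$, the face of the fiber where $\ell$ is minimized. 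Each $C(y)$ is a nonempty convex compactum, $C(b)=\{p\}$ is a proper subset of $f^{-1}(b)$, and I would set $C=\overline{\bigcup_{y\in B}C(y)}$, or better, use the closed convex hull $h$ of $\bigcup_{y\in B}C(y)$ to stay inside $\cc(X)$. The key points to check are: (i) $f(C)=B$ — this holds because $C(y)\neq\emptyset$ for each $y$ and $f$ is continuous; (ii) $C\neq f^{-1}(B)$ — this is where the exposed point is used: near $b$ the set $C$ stays close to $\{p\}$ on the $\ell$-level, so it cannot contain all of $f^{-1}(b)$, using that $f$ is open to control the fibers near $b$.

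The cleanest route for (ii) avoids continuity of $y\mapsto C(y)$ (which can fail) by instead working with a fixed functional globally: since $X$ is compact convex in a locally convex space, choose $\ell\in E^{*}$ and consider $m=\min\{\ell(x)\mid x\in f^{-1}(B)\}$ attained on a proper face $F$ of $f^{-1}(B)$; the trouble is that $f(F)$ need not be all of $B$. So I expect the main obstacle to be exactly the tension between keeping $C$ a genuine convex compactum that projects onto all of $B$ and simultaneously keeping it a proper subset of $f^{-1}(B)$. I would resolve this by a convex combination trick: fix one point $x_0\in f^{-1}(B)$ with $\ell(x_0)>m$ (exists since $|f^{-1}(b)|>1$ and $\ell$ exposes $p$ inside the fiber over some $b$), and set $C=\{(1-t)\,x + t\,x_0 \mid x\in f^{-1}(B),\ t\in[0,\varepsilon]\}$ for small $\varepsilon>0$; wait, this still equals a neighborhood-type set. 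The correct fix: use openness of $f$ to get a convex compactum $C$ strictly between a one-point section and $f^{-1}(B)$ via Proposition~\ref{p:soft}-style selection — apply the softness of $\cc(f)$ to a two-point parameter space $\{0,1\}\to\cc(Y)$, $0,1\mapsto B$, lifting $1\mapsto f^{-1}(B)$ and $0\mapsto C(b)$-type small selector, to obtain $C\in\cc(f)^{-1}(B)$ that is forced to differ from $f^{-1}(B)$ because its $\ell$-minimum over the fiber $f^{-1}(b)$ equals $\ell(p)<\ell$ of some other point of that fiber.

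In summary: the two witnesses are $f^{-1}(B)$ and the selection $C$ built from an exposing functional on a single fiber, extended by softness/openness; the delicate point — and the step I expect to spend the most care on — is verifying $C\neq f^{-1}(B)$, i.e.\ that the face condition on one fiber genuinely survives into the globally-defined convex compactum, which is precisely where openness of $f$ (hence of $\cc(f)$, by Proposition~\ref{p:soft}) does the work.
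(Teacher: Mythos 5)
Your overall strategy (exhibit two elements of $\cc(f)^{-1}(B)$, namely $f^{-1}(B)$ and a proper convex compactum $C\subsetneq f^{-1}(B)$ with $f(C)=B$) is reasonable and close in spirit to the paper, but none of your three candidate constructions of the second witness is actually verified, and the step you yourself flag as delicate is exactly where the argument breaks. For $C=h\bigl(\bigcup_{y\in B}C(y)\bigr)$, the claim $C\neq f^{-1}(B)$ is supported only by ``near $b$ the set $C$ stays close to $\{p\}$ on the $\ell$-level''; this is false for a convex hull: a point of $C$ lying over $b$ can be a convex combination of $\ell$-minimal points taken from fibers far from $b$, and its $\ell$-value can be arbitrarily far above $m(b)=\ell(p)$. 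One can repair this line (choose $b$ an \emph{extreme} point of $B$, so that $f^{-1}(b)$ is a face of $f^{-1}(B)$; use openness of $f$ to get Vietoris-continuity of $y\mapsto f^{-1}(y)$, hence every point of $\overline{\bigcup_{y\in B}C(y)}$ is $\ell$-minimal in its own fiber; then Milman's converse to Krein--Milman yields an extreme point of $f^{-1}(b)$, extreme in $f^{-1}(B)$, that cannot lie in $C$), but no argument of this kind appears in your proposal. Your second attempt you correctly discard, and your ``convex combination trick'' produces, as you notice, the set $f^{-1}(B)$ itself.

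The final ``correct fix'' via softness of $\cc(f)$ applied to a two-point parameter space cannot work even in principle. Softness only asserts the \emph{existence} of a lift extending data already prescribed on a closed subset: to prescribe $0\mapsto$ ``$C(b)$-type small selector'' you must already possess an element of $\cc(f)^{-1}(B)$ different from $f^{-1}(B)$, which is precisely what is to be constructed; and if you prescribe nothing at $0$, nothing in the softness property forces the lifted value to differ from $f^{-1}(B)$ or to control its $\ell$-minimum over the single fiber $f^{-1}(b)$. (In addition, Proposition~\ref{p:soft} is proved only for metrizable $X$, $Y$, while the lemma concerns arbitrary compact convex sets.) For comparison, the paper's proof avoids any global selection: given $A\in\cc(f)^{-1}(B)$, if $A\neq f^{-1}(B)$ enlarge it by one point of $f^{-1}(B)\setminus A$; if $A=f^{-1}(B)$, take an exposed point $x$ of $A$, use openness of $f$ (together with $|f^{-1}(f(x))|>1$) to find a neighborhood $U$ of $x$ with $f(A\setminus U)=B$, and let $A'$ be the closed convex hull of $A\setminus U$; then $A'\in\cc(f)^{-1}(B)$ and $x\notin A'$ because the exposing functional is bounded away from $\ell(x)$ on the compact set $A\setminus U$. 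That local ``cut off an exposed point'' argument is the idea missing from your proposal.
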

\begin{proof} As in the proof of Proposition \ref{p:soft}, one
may assume that $X$ is affinely embedded in the Hilbert space
$\ell^2$.  Let $B\in \cc(Y)$ and $A\in\cc(f)^{-1}(B)$. If $A\neq
f^{-1}(B)$, then we
define $A'$ as the closure of the convex hull of
$A\cup\{x\}$, where $x\in f^{-1}(B)\setminus A$. Then $A'\neq A$
and $A'\in\cc(f)^{-1}(B)$.

If $A= f^{-1}(B)$, then it is well-known (see, e.g. \cite{BL})
there exists an exposed point, $x$ of $A$. Since $f$ is open,
there exists a neighborhood $U$ of $x$ such that $f(A\setminus
U)=B$. In this case we
define $A'$ as the closure of the convex hull
of $A\setminus U$. Note that $A'\in \cc(f)^{-1}(B)$. That $A\neq
A'$ easily follows from the fact that $x$ is an exposed point.
\end{proof}

\begin{lem}\label{l:op}
Suppose that $f\colon X\to Y$ is a continuous affine map of compact
convex subsets of locally convex spaces. If the map $\cc(f)$ is
open then so is the map $f$.
\end{lem}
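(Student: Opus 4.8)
The plan is to recover the openness of $f$ from that of $\cc(f)$ by restricting attention to singletons. Recall that we have identified each point $x\in X$ with $\{x\}\in\cc(X)$; under this identification $X$ sits inside $\cc(X)$ as a closed subset, and likewise $Y\subset\cc(Y)$. The key observation is that $\cc(f)$ carries singletons to singletons: $\cc(f)(\{x\})=f(\{x\})=\{f(x)\}$. In other words, $\cc(f)$ restricts to $f$ on these copies of $X$ and $Y$, and moreover $\cc(f)^{-1}(Y)=X$ (a convex compact set whose image under $f$ is a single point must itself be a single point, since $f$ is affine and... no — careful: an affine map can collapse a nondegenerate set to a point). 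So I must be more delicate: the correct statement is $\cc(f)^{-1}(\{y\}$, viewed inside $\cc(Y))$ need not be inside $X$.

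**Corrected strategy.** Instead I would argue directly with the Vietoris topology. Let $U\subset X$ be open and $x\in U$; I must find an open $V\ni f(x)$ in $Y$ with $V\subset f(U)$. Consider the basic Vietoris neighborhood $\langle U\rangle=\{A\in\cc(X)\mid A\subset U\}$ of the singleton $\{x\}$ in $\cc(X)$ — more precisely $\langle U\rangle\cap\cc(X)$, which is open and contains $\{x\}$. Since $\cc(f)$ is open, $\cc(f)(\langle U\rangle\cap\cc(X))$ is open in $\cc(Y)$ and contains $\cc(f)(\{x\})=\{f(x)\}$. Hence there is a basic Vietoris open set $\langle W_1,\dots,W_n\rangle\cap\cc(Y)$ with $\{f(x)\}\in\langle W_1,\dots,W_n\rangle$ and $\langle W_1,\dots,W_n\rangle\cap\cc(Y)\subset\cc(f)(\langle U\rangle\cap\cc(X))$. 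Since $\{f(x)\}$ belongs to this basic set, $f(x)\in W_i$ for each $i$, so $V:=W_1\cap\dots\cap W_n$ is an open neighborhood of $f(x)$ in $Y$; moreover every singleton $\{y\}$ with $y\in V$ lies in $\langle W_1,\dots,W_n\rangle$, hence in $\cc(f)(\langle U\rangle\cap\cc(X))$.

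**Conclusion.** Now fix $y\in V$. By the previous step $\{y\}=\cc(f)(A)=f(A)$ for some convex compact $A\subset U$. Since $f(A)=\{y\}$ is a singleton, in particular $A\neq\emptyset$ gives a point $a\in A\subset U$ with $f(a)=y$. Thus $y\in f(U)$, so $V\subset f(U)$, and since $V$ is an open neighborhood of $f(x)$ we conclude that $f(U)$ is a neighborhood of each of its points, i.e. $f$ is open.

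**Main obstacle.** The only subtlety — and the point one must not botch — is keeping straight the two different roles of $\cc(Y)$: as the codomain of $\cc(f)$ and as the ambient hyperspace in which singletons $\{y\}$ live. The argument never needs that preimages of singletons under $\cc(f)$ are singletons (they are not); it only uses that the forward image of the open set $\langle U\rangle$ contains all singletons over a neighborhood of $f(x)$, together with the trivial fact that a convex compact set mapping onto a one-point set is nonempty and contained in $U$. No continuity or softness of $\cc(f)$ beyond openness is required, and the compactness hypotheses are used only implicitly via the standing conventions.
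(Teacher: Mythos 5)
Your proof is correct and rests on the same key idea as the paper's: apply openness of $\cc(f)$ at the singleton $\{x\}$ and observe that any $A\in\cc(X)$ with $A\subset U$ and $f(A)=\{y\}$ yields a point of $U$ in the fibre over $y$. The paper packages this as a proof by contradiction using nets, while you argue directly with basic Vietoris open sets $\langle U\rangle$ and $\langle W_1,\dots,W_n\rangle$; the difference is purely presentational, and your handling of the basis (in particular that $\{y\}\in\langle W_1,\dots,W_n\rangle$ exactly when $y\in W_1\cap\dots\cap W_n$) is accurate.
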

\begin{proof} Suppose the contrary,  that $f$ is not open. Then
there exists $x\in X$ and a  net
$(y_\alpha)_{\alpha\in A}$ in $Y$ converging to $y=f(x)$,
such that there is no  net $(x_\alpha)_{\alpha\in A}$ in $X$ converging
to $x$
with $x_\alpha\in
f^{-1}(y_\alpha)$, for every $\alpha\in A$.

Assuming that the map $\cc(f)$ is open, we obtain that there exists
a net $(C_\alpha)_{\alpha\in A}$ in $\cc(X)$ converging to $\{x\}$
and such that $\cc(f)(C_\alpha)=\{y_\alpha\}$, for every $\alpha\in
A$. Then, obviously, the net $(c_\alpha)_{\alpha\in A}$ converges
to $x$, for every choice $c_\alpha\in C_\alpha$, $\alpha\in A$. This
gives a contradiction.
\end{proof}

A commutative diagram
\begin{equation}\label{f:1}
\mathcal{D}={\xymatrix{X\ar[r]^f\ar[d]_g&
Y\ar[d]^u\\ Z\ar[r]_v& T}}
\end{equation} is called {\it soft} if its {\it
characteristic map} $$\chi_{\mathcal{D}}=(f,g)\colon X\to
Y\times_TZ=\{(y,z)\in Y\times Z\mid u(y)=v(z)
\}$$ is soft.
\begin{lem}\label{l:soft}
Suppose that a commutative diagram $\mathcal{D} $ (see formula
(\ref{f:1})) in the category $\conv$ consists of metrizable spaces.
If the diagram $\cc(\mathcal{D}) $ is soft, then so is the diagram
$\mathcal{D} $.
\end{lem}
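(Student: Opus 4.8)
The plan is to reduce the softness of $\mathcal D$ to the softness of $\cc(\mathcal D)$ by exhibiting each of the relevant maps for $\mathcal D$ as a retract-compatible restriction of the corresponding map for $\cc(\mathcal D)$, exactly in the spirit of the proof of Proposition \ref{p:soft}. First I would set up the characteristic maps. Writing $\chi_{\mathcal D}=(f,g)\colon X\to Y\times_T Z$ and $\chi_{\cc(\mathcal D)}=(\cc(f),\cc(g))\colon \cc(X)\to \cc(Y)\times_{\cc(T)}\cc(Z)$, I would first observe that applying $\cc$ to a pullback of spaces in $\conv$ produces something mapping naturally onto the pullback $\cc(Y)\times_{\cc(T)}\cc(Z)$; more to the point, there is a natural embedding $j\colon Y\times_T Z\hookrightarrow \cc(Y)\times_{\cc(T)}\cc(Z)$ sending $(y,z)$ to $(\{y\},\{z\})$, and under the singleton identifications of the preliminaries the square
\[
\xymatrix{X\ar[r]^-{\chi_{\mathcal D}}\ar@{^{(}->}[d] & Y\times_T Z\ar@{^{(}->}[d]^{j} \\ \cc(X)\ar[r]_-{\chi_{\cc(\mathcal D)}} & \cc(Y)\times_{\cc(T)}\cc(Z)}
\]
commutes, with vertical maps the singleton embeddings.

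Next I would run the parameterized selection argument. Suppose a test diagram
\[
\xymatrix{A\ar[r]^-{p}\ar@{^{(}->}[d] & X\ar[d]^{\chi_{\mathcal D}} \\ W\ar[r]_-{q} & Y\times_T Z}
\]
is given with $A$ closed in a paracompact space $W$. Composing $p$ with the singleton embedding $X\hookrightarrow\cc(X)$ and $q$ with $j$, and using that $\cc(\mathcal D)$ is soft, I obtain $\widetilde\Phi\colon W\to\cc(X)$ extending the composite $A\to X\hookrightarrow\cc(X)$ and lifting $j\circ q$. The issue is that $\widetilde\Phi(w)$ need not be a singleton, so it does not a priori give a map into $X$. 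To fix this I would compose with a selection: as in the proof of Proposition \ref{p:soft}, affinely embed $X$ into $\ell^2$ (via the embedding $\alpha$ from the first lemma followed by a separating sequence of functionals), and use the nearest-point map $\xi$ to select from $\widetilde\Phi(w)$ the point $\Phi(w)$ closest to some fixed reference selection — here the natural choice is to take the nearest point in $\widetilde\Phi(w)$ to the image under a previously-chosen retraction $W\to X\subset\ell^2$ extending $p$ (which exists since $X$ is an AR). Continuity of $\xi$ and of $w\mapsto\widetilde\Phi(w)$ gives a continuous $\Phi\colon W\to X$; on $A$ it agrees with $p$ because there $\widetilde\Phi$ is already the singleton $\{p(a)\}$, so the nearest point is $p(a)$ itself.

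The main obstacle is verifying that this selected $\Phi$ actually lifts $q$, i.e.\ $\chi_{\mathcal D}\circ\Phi=q$, and not merely $\chi_{\cc(\mathcal D)}\circ\widetilde\Phi=j\circ q$. The point is that $\cc(f)(\widetilde\Phi(w))=\{q_Y(w)\}$ and $\cc(g)(\widetilde\Phi(w))=\{q_Z(w)\}$ are singletons, where $q=(q_Y,q_Z)$; hence $\widetilde\Phi(w)$ is a convex compact subset of $X$ lying in a single fibre of $f$ and in a single fibre of $g$, so it lies in a single fibre of $\chi_{\mathcal D}=(f,g)$. Therefore \emph{every} point of $\widetilde\Phi(w)$, and in particular the selected point $\Phi(w)$, satisfies $\chi_{\mathcal D}(\Phi(w))=q(w)$. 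This is the key structural fact that makes the nearest-point selection harmless for the lifting condition, and it is essentially the same mechanism (a hull/retraction compatibility, here a fibrewise one) that was used to transport openness along $\cc$ in Proposition \ref{p:soft}. Assembling these observations yields that $\chi_{\mathcal D}$ is soft, i.e.\ the diagram $\mathcal D$ is soft.
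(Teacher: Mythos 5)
Your argument is correct, but it takes a genuinely different route from the paper's. The paper's proof uses only the \emph{openness} consequence of softness of $\cc(\mathcal D)$: given a sequence $(y_i,z_i)\to(y,z)$ in $Y\times_TZ$ and a point $x$ with $\chi_{\mathcal D}(x)=(y,z)$, openness of $\chi_{\cc(\mathcal D)}$ yields convex sets $A_i\to\{x\}$ with $(\cc(f),\cc(g))(A_i)=(\{y_i\},\{z_i\})$, and picking arbitrary points $x_i\in A_i$ shows that $\chi_{\mathcal D}$ is open; softness is then obtained in one stroke by citing the Michael Selection Theorem (an open affine map of metrizable convex compacta is soft), so the metrizable case is delegated to a known theorem rather than re-proved. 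You instead verify the lifting property of $\chi_{\mathcal D}$ directly: you push the singleton-valued test data through the softness of $\chi_{\cc(\mathcal D)}$ via the embedding $(y,z)\mapsto(\{y\},\{z\})$, observe the key structural fact that a lift $\widetilde\Phi(w)$ whose images under $\cc(f)$ and $\cc(g)$ are singletons is contained in a single fibre of $\chi_{\mathcal D}$, and then select a point continuously by the nearest-point map after an affine embedding of $X$ into $\ell^2$ --- the same selection mechanism the paper deploys inside the proof of Proposition \ref{p:soft}, here applied one level down. Your route is self-contained (no appeal to Michael's theorem for $\chi_{\mathcal D}$, and surjectivity of $\chi_{\mathcal D}$ comes for free from the lifting), while using the full strength of softness of $\cc(\mathcal D)$; the paper's route is shorter and isolates the only ingredient it needs, namely openness. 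Two minor remarks: the reference map $r\colon W\to X$ extending $p$ is not needed for the agreement $\Phi|A=p$ (that already follows from $\widetilde\Phi(a)=\{p(a)\}$), so any continuous reference map, even a constant one, would do; and the extension of $p$ over the paracompact $W$ is legitimate for the same reason as the analogous step in Proposition \ref{p:soft}, since $X$ is a metrizable compact AR and hence an absolute extensor for paracompact spaces.
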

\begin{proof} First we show that  the diagram $\mathcal{D}$ is
open if such
is $\cc(\mathcal{D}) $. Let $(y_i,z_i)_{i=1}^\infty$ be
a sequence in $Y\times_TZ$ converging to a point $(y,z)$ and let
$x\in X$ be such that $\chi_\mathcal{D}(x)=(y,z)$. Since
$\cc(\mathcal{D})$ is soft (and therefore open), there exists a
sequence $(A_i)_{i=1}^\infty$ in $\cc(X)$ such that
$(f(A_i),g(A_i))=(\{y_i\},\{z_i\})$, for every $i$, and
$(A_i)_{i=1}^\infty$ converges to $\{x\}$ in $\cc(X)$. Choose
arbitrary $x_i\in A_i$, then $(f(x_i),g(x_i))=(y_i,z_i)$, for
every $i$, and $(x_i)_{i=1}^\infty$ converges to $x$ in $X$. This
shows that the map $\chi_{\mathcal{D}}$ is open.

Now the map $\chi_{\mathcal{D}}$, being an open affine map of
convex compact metrizable subspaces of locally convex spaces, is
soft. This follows from the Michael Selection Theorem \cite{Mi}
(see e.g. \cite{S}).
\end{proof}
\section{Hyperspaces $\cc(X)$ homeomorphic to Tychonov cubes}

We are going to recall
some definitions and results related to the
Shchepin Spectral Theorem  (see \cite{S} for details).
In what follows, an {\it inverse system} $\mathcal S=
\{X_\alpha,p_{\alpha\beta};\mathcal A\}$ satisfies the~following
conditions:
\begin{enumerate}
\item[1)] $X_\alpha$ are compact Hausdorff spaces;
\item[2)] $p_{\alpha\beta}$ are surjective;
\item[3)] the partially ordered set $\mathcal A$ (by $\le$) is directed,
i.e., for every $\alpha,\beta\in\mathcal A$ there exists
$\gamma\in\mathcal A$ with $\alpha\le\gamma$, $\beta\le\gamma$.
\end{enumerate}

An inverse system $\mathcal S= \{X_\alpha,p_{\alpha\beta};\mathcal
A\}$ is called {\it open\/} if all the maps $p_{\alpha\beta}$ are
open.
An inverse system
$\CS=\{X_\alpha,p_{\alpha\beta};\mathcal A\}$ is called {\it
continuous\/} if for every $\alpha\in\mathcal A$ we have
$X_\alpha=\varprojlim\{X_{\alpha '},p_{\alpha '\beta '};\alpha
',\beta '<\alpha\}$.

By $\mathrm{w}(X)$ we denote the {\it weight\/} of a~space $X$. An
inverse system $\mathcal S=\{X_\alpha,p_{\alpha\beta};\mathcal
A\}$ is called a~$\tau$-{\it system}, $\tau$ being a~car\-di\-nal
number, if the~following holds:
\begin{enumerate}
  \item[1)] the directed set $\mathcal A$ is $\tau$-complete, i. e. every
  chain of cardinality $\le\tau$ in $\mathcal A$ has the least upper
  bound;
  \item[2)] $\mathcal S$ is continuous;
  \item[3)] $\mathrm{w}(X_\alpha)\le\tau$, for every $\alpha\in\mathcal A$.
\end{enumerate}

If $\tau=\omega$, we use the terms $\sigma$-{\it complete\/} and
$\sigma$-{\it system.\/}

For every $A$,  we denote the~family of
all countable subsets of $A$ ordered by inclusion
by $\mathcal P_\omega(A)$.

A standard way to represent a compact Hausdorff space $X$ as a
limit of a $\sigma$-system is to embed it into a Tychonov cube
$I^\tau$, for some $\tau$. For any countable $A\subset\tau$, let
$X_A=p_A(X)$, where $p_A\colon I^\tau\to I^A$ denotes the
projection. In this way we obtain an inverse system $\mathcal
S=\{X_A,p_{AB};\mathcal P_\omega(\tau)\}$, where, for $A\supset
B$,  $p_{AB}\colon X_A\to X_B$ denotes the (unique) map with the
property $p_B|X=p_{AB}(p_A|X)$. The resulting inverse system
$\mathcal S$ is a $\sigma$-system and $X=\varprojlim\mathcal S$.

If $X$ is a compact convex subset of a locally convex space, we
can affinely embed $X$ into $I^\tau$, for some $\tau$. The above
construction gives us an inverse $\sigma$-system $\mathcal S$ in
the category $\conv$ such that $X=\varprojlim\mathcal S$.

In the sequel, we will use the well-known fact that the functor
$\cc$ is continuous in the sense that it commutes with the limits
of inverse systems.

A compact Hausdorff space $X$ is {\em openly generated} if $X$ is
the limit of an inverse $\sigma$-system with open short
projections.
The {\em absolute retracts} (ARs) are considered in the class of
compact Hausdorff spaces.

\begin{thm}\label{t:1}
Let $X$ be a convex compact subset of a locally convex space. Then the
space $\cc(X)$ is an~absolute retract if and only if $X$ is openly
generated and of weight $\le\omega_1$.
\end{thm}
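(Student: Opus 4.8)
The plan is to prove both implications by spectral analysis. For the ``only if'' direction, suppose $\cc(X)$ is an absolute retract. Since $\cc$ is a continuous functor and $X$ embeds affinely in some $I^\tau$, we obtain an inverse $\sigma$-system $\mathcal S=\{X_A,p_{AB};\mathcal P_\omega(\tau)\}$ in $\conv$ with $X=\varprojlim\mathcal S$, hence $\cc(X)=\varprojlim\cc(\mathcal S)$. Being an AR of the form $\varprojlim$ of a $\sigma$-system, $\cc(X)$ is openly generated by the Shchepin spectral theorem, so (passing to a cofinal subsystem) the bonding maps $\cc(p_{AB})$ are open; by Lemma \ref{l:op} the maps $p_{AB}$ are then open as well, so $X$ itself is openly generated. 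The weight bound $\w(X)\le\omega_1$ must follow from the fact that for a compact convex $X$ with $\w(X)\ge\omega_2$ one can, using a noninjective open short projection $p_{AB}$ (which exists as soon as $\w(X)$ is uncountable), apply Lemma \ref{l:3} to see that $\cc(p_{AB})$ has all fibers of cardinality $>1$; iterating this along an $\omega_1$-indexed chain produces inside $\cc(X)$ a copy of a space that is not an AR --- concretely, one builds a map from $\cc(X)$ onto a space that cannot be an AR because it is not openly generated by an $\omega_1$-system. (This is the standard ``$\tau=\omega_1$ ceiling'' phenomenon, exactly as for $P(I^\tau)$ and $\exp(D^\tau)$.)

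For the ``if'' direction, assume $X$ is openly generated with $\w(X)\le\omega_1$. Represent $X=\varprojlim\{X_\alpha,p_{\alpha\beta};\alpha<\omega_1\}$ as a continuous well-ordered inverse system of compact convex metrizable spaces with open affine short projections $p_{\alpha,\alpha+1}$; this is possible by the spectral theorem together with the fact that open generation lets us arrange the short maps to be open and affine. Applying the functor $\cc$ gives $\cc(X)=\varprojlim\{\cc(X_\alpha),\cc(p_{\alpha\beta});\alpha<\omega_1\}$, again a continuous $\omega_1$-system, with each $\cc(X_\alpha)$ metrizable. By Proposition \ref{p:soft} each short projection $\cc(p_{\alpha,\alpha+1})$ is soft; in particular it is a soft map of metrizable compacta with AR fibers (each fiber $\cc(p_{\alpha,\alpha+1})^{-1}(B)$ is a convex-type metrizable compactum that is an AR). A continuous well-ordered $\omega_1$-system whose short projections are soft surjections and whose first term is a metrizable AR has limit an AR: at successor stages softness gives the retraction extension, and at limit stages continuity of the system preserves the AR property. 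Since $\cc(X_0)$ is a metrizable AR (the hyperspace of convex compacta of a metrizable compact convex set is an AR), we conclude $\cc(X)$ is an AR.

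The main obstacle is the ``only if'' direction, specifically extracting the weight bound $\w(X)\le\omega_1$. The open-generation part is a clean consequence of the spectral theorem plus Lemma \ref{l:op}, but ruling out weight $\ge\omega_2$ requires genuinely using that $\cc$ behaves like a ``large'' functor: one must show that composing sufficiently many noninjective open projections forces $\cc(X)$ to fail the characterization of $\omega_1$-generated ARs. The precise mechanism is to use Lemma \ref{l:3} to guarantee that each relevant short projection of $\cc$-spaces is a surjection with all fibers nondegenerate, hence (being soft) cannot be trivial, and then to invoke the known result that an AR which is the limit of an $\omega_2$-system with all short projections nontrivial soft maps contains a closed copy of $D^{\omega_2}$ or $I^{\omega_2}$, contradicting AR-ness (cf.\ the analogous arguments for $\exp$ and $P$). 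Making this counting argument precise --- in particular verifying that the noninjectivity of $p_{\alpha\beta}$ really does propagate to nontriviality of $\cc(p_{\alpha\beta})$ on a cofinal set of indices --- is the technical heart of the proof.
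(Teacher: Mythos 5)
Your ``if'' direction and the open-generation half of the ``only if'' direction are essentially the paper's argument: represent $X$ (resp.\ $\cc(X)$) as the limit of an $\omega_1$-system of metrizable convex compacta, apply Proposition \ref{p:soft} to get soft bonding maps and Shchepin's theory to conclude AR-ness, and conversely use the Spectral Theorem plus Lemma \ref{l:op} to transfer openness of $\cc(p_{\alpha\beta})$ down to $p_{\alpha\beta}$. That part is fine.

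The genuine gap is the weight bound $\w(X)\le\omega_1$, which you yourself flag as unresolved, and the mechanism you propose for it would not work. First, the ``known result'' you invoke is not a contradiction: a compact AR of weight $\ge\omega_2$ \emph{does} contain a copy of $I^{\omega_2}$ (this is cited in the paper's last section), so exhibiting such a copy inside $\cc(X)$ rules out nothing; likewise $D^{\omega_2}\subset I^{\omega_2}$ sits in ARs. Second, Lemma \ref{l:3} is not the right tool here: it needs \emph{all} fibers of $f$ to be nondegenerate (not merely noninjectivity of some projection), and its conclusion --- nondegeneracy of the fibers of $\cc(f)$ --- is used in the paper for the character-homogeneity argument in the $I^{\omega_1}$ theorem, not for the weight bound; nondegenerate fibers of a soft map do not by themselves produce a non-AR. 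The paper's actual route is different and more delicate: assuming $\cc(X)$ is an AR of weight $\ge\omega_2$, one first extracts a convex compactum $\tilde X$ of weight exactly $\omega_2$ with $\cc(\tilde X)$ an AR, then uses Lemma \ref{l:soft} (softness of diagrams descends from $\cc(\mathcal D)$ to $\mathcal D$ on metrizable spaces) to conclude that $\tilde X$ itself is an AR, hence contains $I^{\omega_2}$; inside the $\omega_2$-system for $\tilde X$ one then builds, by transfinite induction, an \emph{affinely independent} copy of the Aleksandrov supersequence $D$ of weight $\omega_1$ in some $\tilde X_\alpha$, and shows that the preimage $\cc(\tilde p_{\alpha+1,\alpha})^{-1}(h(D))$ of its closed convex hull contains an uncountable disjoint family of nonempty open sets, i.e.\ fails the Souslin condition. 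Since preimages of AR subspaces under soft maps must be ARs (hence ccc), this contradicts softness of $\cc(\tilde p_{\alpha+1,\alpha})$. None of these steps --- the descent to $\tilde X$ via Lemma \ref{l:soft}, the affinely independent supersequence, and the ccc violation in a single fiber-type preimage --- appears in your sketch, and without them the ``only if'' direction is not proved.
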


\begin{proof}
If $X$ is openly generated and of weight $\le\omega_1$, then $X$
is homeomorhic to $\varprojlim\mathcal S$, where $\mathcal
S=\{X_\alpha,p_{\alpha\beta};\omega_1\}$ is an inverse system
consisting of convex compact subsets of metrizable locally convex
spaces and open maps. Then $\cc(X)$ is homeomorhic to
$\varprojlim\cc(\mathcal S)$. Since the spaces $\cc(X_\alpha)$ are
ARs and the maps $\cc(p_{\alpha\beta})$ are soft (see Proposition
\ref{p:soft}), the space $\cc(X)$ is an AR.

Suppose now that $\cc(X)$ is an AR of weight $\ge\omega_2$. It
easily follows from standard results of Shchepin's theory that
there exists a compact convex space $\tilde X$ of weight
$\omega_2$ such that $\cc(\tilde X)$ is an AR (see \cite{S} and
also \cite{Ch}, where the case of locally convex spaces is
considered). We may assume that $\cc(\tilde
X)=\varprojlim\cc(\tilde{\mathcal S})$, where $\tilde{\mathcal
S}=\{\tilde X_\alpha,\tilde p_{\alpha\beta};\omega_2\}$ is an
inverse system such that for every $\alpha<\omega_2$ the space
$\cc(\tilde X_\alpha)$ is an AR and for every $\alpha,\beta$,
$\beta\le\alpha<\omega_2$, the map $\cc(\tilde p_{\alpha\beta})$
is soft. In its turn, every $\tilde X_\alpha$ can be represented
as $\varprojlim\tilde{\mathcal S}_\alpha$, where $\tilde{\mathcal
S}_\alpha=\{\tilde X_{\alpha\gamma},\tilde
q_{\gamma\delta}^\alpha;\omega_1 \}$ is an inverse system in
$\conv$ and it follows from the results of Chigogidze \cite{Ch}
that for every $\alpha,\beta$, where $\beta \le \alpha<\omega_2$,
the map $\tilde p_{\alpha\beta}$ is the limit of a morphism
$(\tilde
p_{\alpha\beta}^\gamma)_{\gamma<\omega_1}\colon\tilde{\mathcal
S}_\alpha\to\tilde{\mathcal S}_\beta$ such that the maps
$\cc(\tilde p_{\alpha\beta}^\gamma)$ are soft and for every
$\gamma\ge\delta$, $\gamma,\delta<\omega_1$, the diagram
$$\xymatrix{{\cc(\tilde
X_{\alpha\gamma})}\ar[r]^{\cc(\tilde p_{\alpha\beta}^\gamma)}
\ar[d]_{\cc(q_{\gamma\delta}^\alpha)}&
{\cc(\tilde X_{\beta\gamma})}\ar[d]^{\cc(q_{\gamma\delta}^\beta)}\\
{\cc(\tilde X_{\alpha\delta})}\ar[r]_{\cc(\tilde
p_{\alpha\beta}^\delta)} & {\cc(\tilde X_{\beta\delta})}}$$ is
soft. Since all the spaces in the above diagram are metrizable, by
Lemma \ref{l:soft}, the diagram
$$\xymatrix{{\tilde X_{\alpha\gamma}}\ar[r]^{\tilde
p_{\alpha\beta}^\gamma}
\ar[d]_{q_{\gamma\delta}^\alpha}&
{\tilde X_{\beta\gamma}}\ar[d]^{q_{\gamma\delta}^\beta}\\ {\tilde
X_{\alpha\delta}}\ar[r]_{\tilde p_{\alpha\beta}^\delta} & {\tilde
X_{\beta\delta}}}$$ is also soft. As the limits of soft morphisms,
the maps $\tilde p_{\alpha\beta}$ are soft and we conclude that the
space $\tilde X$ is an absolute retract.

Since the space $\tilde X$ is an AR, it contains a copy of the
Tychonov cube $I^{\omega_2}$. It follows from the Shchepin
Spectral Theorem that, without loss of generality, one may assume
that every $\tilde X_\alpha$ contains the space
$(I^{\omega_1})^\alpha$ and for every $\alpha,\beta$, where
$\beta\le\alpha<\omega_2$, the map $\tilde
p_{\alpha\beta}|(I^{\omega_1})^\alpha$ is the projection map of
$(I^{\omega_1})^\alpha$ onto $(I^{\omega_1})^\beta$.

Denote by $D$ the Aleksandrov supersequence of weight $\omega_1$,
i.e. the one-point compactification of a discrete space of
cardinality $\omega_1$.

\smallskip
{\bf Claim.} There exists $\alpha<\omega_2$ such that the subspace
$(I^{\omega_1})^\alpha\subset \tilde X_\alpha$ contains an affinely
independent copy of the space $D$.

\smallskip
{\it Proof of Claim.}  Represent $D$ as $\{d_\gamma\mid
\gamma\le\omega_1\}$, where $d_{\omega_1}$ denotes
 the unique non-isolated point of $D$. For $\gamma<\omega_1$,
 let $r_\gamma\colon D\to \{d_\delta\mid
\delta\le\gamma\}\cup \{d_{\omega_1}\}$ denote the retraction that
sends $\{d_\delta\mid
\gamma<\delta<\omega_1\}$ into $d_{\omega_1}$.

Define by transfinite induction maps $f_\gamma\colon D\to
(I^{\omega_1})^{\alpha_\gamma}\subset \tilde X_{\alpha_\gamma}$,
where $\gamma<\omega_1$ and $\alpha_\gamma<\omega_2$, so that
$\alpha_{\gamma}\le\alpha_{\gamma'}$ and $\tilde
p_{\alpha_{\gamma'}\alpha_\gamma}f_{\gamma'}=f_\gamma$ for every
$\gamma\le\gamma'$.

Let $f_0\colon D\to (I^{\omega_1})^{\alpha_0}\subset \tilde
X_{\alpha_0}$ be an arbitrary constant map, for some
$\alpha_0<\omega_2$. Suppose that, for some $\delta<\omega_1$,
maps $f_\gamma$ are already defined for every $\gamma<\delta$ so
that $f_\gamma=i_\gamma r_\gamma$ for some embedding
$i_\gamma\colon r_\gamma(D)\to \tilde X_{\alpha_\gamma}$. If
$\delta$ is a limit ordinal, let
$\alpha_\delta=\sup\{\alpha_\gamma\mid\gamma<\delta\}$ and
$f_\delta=\varprojlim \{f_\gamma\mid\gamma<\delta\}$. If
$\delta=\delta'+1$, let $\alpha_\delta=\alpha_{\delta'}+1$ and
find an embedding $i_\delta\colon r_\delta(D)\to
(I^{\omega_1})^{\alpha_\delta}\subset \tilde X_{\alpha_\delta}$
such that $\tilde
p_{\alpha_{\delta}\alpha_{\delta'}}i_\delta=i_{\delta'}$ and
$\tilde
p_{\alpha_{\delta}\alpha_{\delta'}}i_\delta(d_\delta)=i_{\delta'}
(d_{\delta'})$. Put $f_\delta=i_\delta r_\delta$.

Finally, let $\alpha=\sup\{\alpha_\gamma\mid\gamma<\omega_1\}$ and
$f=\varprojlim
\{f_\gamma\mid\gamma<\omega_1\}$.
Claim is thus proved.

\smallskip

We now return to the proof of the theorem. Without loss of generality, we
assume that $D\subset (I^{\omega_1})^\alpha\subset \tilde
X_\alpha$ and $D$ is affinely independent in $\tilde X_\alpha$.
Recall that $h(D)$ denotes the closed convex hull of $D$ in
$\tilde X_\alpha$. We are going to show that the space
$(\cc(\tilde p_{\alpha+1,\alpha}))^{-1}(h(D))$ does not satisfy
the Souslin condition. There exist two maps
 $s_1,s_2\colon D\to \tilde X_{\alpha+1}$
such that $\tilde p_{\alpha+1,\alpha}s_1=\tilde
p_{\alpha+1,\alpha}s_2=1_{D}$ and $s_1(D)\cap s_2(D)=\emptyset$.
Let $U_1,U_2$ be neighborhoods of $s_1(D)$ and $s_2(D)$
respectively such that $\bar U_1\cap\bar U_2=\emptyset$.

For every isolated point $y\in D$ let $V_y$ be a neighborhood of
$y$ in $\tilde X_\alpha$ such that $\bar V_y\cap
h(D\setminus\{y\})=\emptyset$.

Let $$W_y=\langle \tilde X_{\alpha+1}\setminus (\bar U_2\cap\tilde
p_{\alpha+1,\alpha}^{-1}(D\setminus\{y\})), U_2\cap
\tilde p_{\alpha+1,\alpha}^{-1}(\bar V_y)\rangle.$$

We are going to show that $\cc(\tilde
p_{\alpha+1,\alpha})^{-1}(h(D))\cap W_y\neq\emptyset$. To this
end, consider the set $B=h(s_1(D\setminus\{y\})\cup\{s_2(y)\})$.
Obviously, $B\in\cc(\tilde p_{\alpha+1,\alpha})^{-1}(h(D))$ and
$s_2(y)\in B\cap U_2\cap \tilde p_{\alpha+1,\alpha}^{-1}(\bar
V_y)$. In addition, for every $z\in D\setminus\{d_{\omega_1}\}$,
$z\neq y$, we have $B\cap \tilde
p_{\alpha+1,\alpha}^{-1}(z)=\{s_1(z)\}$, therefore $B\subset
\tilde X_{\alpha+1}\setminus (\bar U_2\cap\tilde
p_{\alpha+1,\alpha}^{-1}(D\setminus\{y\}))$. We conclude that
$B\in W_y$.

It remains to prove that for every $y,z\in
D\setminus\{d_{\omega_1}\}$, $y\neq z$, we have $W_y\cap W_z\cap
\cc(\tilde p_{\alpha+1,\alpha})^{-1}(h(D))\neq\emptyset$. Indeed,
otherwise, for any $A\in W_y\cap W_z\cap \cc(\tilde
p_{\alpha+1,\alpha})^{-1}(h(D))$ we would have $A\cap \tilde
p_{\alpha+1,\alpha}^{-1}(y)\subset
p_{\alpha+1,\alpha}^{-1}(y)\setminus \bar U_2$ and, on the other
hand, $A\cap \tilde p_{\alpha+1,\alpha}^{-1}(y)\subset U_2$, a
contradiction. We therefore conclude that $$\{W_y\cap \cc(\tilde
p_{\alpha+1,\alpha})^{-1}(h(D))\mid y\in
D\setminus\{d_{\omega_1}\}\}$$ is a family of nonempty disjoint
open subsets in $\cc(\tilde p_{\alpha+1,\alpha})^{-1}(h(D))$.
Since the space $\cc(\tilde p_{\alpha+1,\alpha})^{-1}(h(D))$ does
not satisfy the Souslin condition, we obtain that $\cc(\tilde
p_{\alpha+1,\alpha})^{-1}(h(D))\not\in\ $AR and hence the map
$\cc(\tilde p_{\alpha+1,\alpha})$ is not a soft map. This
contradiction demonstrates that $\w(X)\le\omega_1$.

We are going to show that $X$ is openly generated. Since $\cc(X)$
is an AR of weight $\omega_1$, then there exists
 an
inverse system $\mathcal S=\{X_\alpha,p_{\alpha\beta};\omega_1\}$
consisting of compact metrizable convex spaces and affine maps
such that $\cc(X)=\varprojlim \cc(\mathcal S)$. Applying
Shchepin's Spectral Theorem, we may additionally assume that all
the maps $\cc(p_{\alpha\beta})$, $\beta\le\alpha<\omega_1$, are
soft. By Lemma \ref{l:op}, the maps $p_{\alpha\beta}$,
$\beta\le\alpha<\omega_1$, are soft and therefore open.

\end{proof}

\begin{thm}
Let $X$ be a convex compact subset of a locally convex space. Then the
space $\cc(X)$ is homeomorphic to $I^{\omega_1}$ if and only if
$X$ is homeomorphic to  $I^{\omega_1}$.
\end{thm}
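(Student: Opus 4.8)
The plan is to deduce both implications from Shchepin's topological characterization of the Tychonov cube: a nonmetrizable compact Hausdorff absolute retract which is homogeneous with respect to character (that is, $\chi(x,Y)=\w(Y)$ for every point $x$) is a Tychonov cube $I^\tau$ with $\tau=\w(Y)$ (see \cite{S1} and \cite{S}). In each direction I would check that the relevant space is an absolute retract of weight $\omega_1$ and then verify character-homogeneity; the functorial input is Theorem \ref{t:1}, Proposition \ref{p:soft} and Lemma \ref{l:3}.

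\textit{Sufficiency.} Suppose $X$ is homeomorphic to $I^{\omega_1}$. Then $X$ is openly generated of weight $\omega_1$, so by Theorem \ref{t:1} the space $\cc(X)$ is an absolute retract, and $\w(\cc(X))=\omega_1$ since $\cc(X)$ is a subspace of $\exp X$ and contains a topological copy of $X$ (the singletons). By the characterization it remains to prove that $\cc(X)$ is homogeneous with respect to character. I would write $X=\varprojlim\{X_\alpha,p_{\alpha\beta};\alpha<\omega_1\}$ as a continuous well-ordered $\omega_1$-system of compact metrizable convex sets with open affine short projections $p_{\alpha+1,\alpha}$; since $\cc$ commutes with limits and each $\cc(p_{\alpha\beta})$ is soft (Proposition \ref{p:soft}, hence surjective), $\cc(X)=\varprojlim\{\cc(X_\alpha),\cc(p_{\alpha\beta});\alpha<\omega_1\}$ is again a continuous well-ordered $\omega_1$-system with surjective short projections, with limit projections $q_\alpha\colon\cc(X)\to\cc(X_\alpha)$. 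Using $\mathrm{cf}(\omega_1)>\omega$ and a decreasing countable local base one shows: if $\chi(A,\cc(X))\le\omega$ for some $A\in\cc(X)$, then $q_{\alpha_0}^{-1}(q_{\alpha_0}(A))=\{A\}$ for some $\alpha_0<\omega_1$, whence $\cc(p_{\alpha_0+1,\alpha_0})^{-1}(q_{\alpha_0}(A))=\{q_{\alpha_0+1}(A)\}$ is a one-point set. But if the system is chosen so that each short projection $p_{\alpha+1,\alpha}$ is an open affine map all of whose point-preimages contain more than one point, Lemma \ref{l:3} shows every fibre of $\cc(p_{\alpha+1,\alpha})$ contains more than one point — a contradiction. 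Hence $\chi(A,\cc(X))=\omega_1$ for every $A$, and $\cc(X)\cong I^{\omega_1}$.

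\textit{Necessity.} Suppose $\cc(X)$ is homeomorphic to $I^{\omega_1}$. Since $I^{\omega_1}$ is an absolute retract of weight $\omega_1$, Theorem \ref{t:1} gives that $X$ is openly generated with $\w(X)\le\omega_1$; moreover $\w(X)=\omega_1$, since otherwise $X$, and hence $\cc(X)\subset\exp X$, would be metrizable. Being openly generated and convex, $X$ is itself an absolute retract: it is the limit of a continuous well-ordered $\omega_1$-system of compact metrizable convex sets (thus absolute retracts) along open affine bonding maps, which are soft by the Michael Selection Theorem (cf. the proof of Lemma \ref{l:soft} and \cite{Ch}). Finally, $X$ is homogeneous with respect to character: if some $x_0\in X$ had countable character, with countable local base $\{U_n\}$, then $\{\langle U_n\rangle\}$ would be a countable local base at the singleton $\{x_0\}$ in $\cc(X)$, because any basic Vietoris neighbourhood $\langle W_1,\dots,W_m\rangle$ of $\{x_0\}$ satisfies $x_0\in W:=W_1\cap\dots\cap W_m$ and $\langle U_n\rangle\subset\langle W\rangle\subset\langle W_1,\dots,W_m\rangle$ whenever $U_n\subset W$; this contradicts $\chi(\cdot,\cc(X))=\omega_1$. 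So $\chi(x,X)=\omega_1$ for every $x$, and Shchepin's characterization yields $X\cong I^{\omega_1}$.

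\textit{On the main obstacle.} The genuinely delicate point lies in the sufficiency part: applying Lemma \ref{l:3} uniformly requires a continuous well-ordered $\omega_1$-decomposition of $X$ in which \emph{every} short projection $p_{\alpha+1,\alpha}$ has all point-preimages non-degenerate. For the standard Tychonov cube, with coordinate projections omitting infinitely many coordinates, this is automatic; handling an arbitrary convex compactum homeomorphic to $I^{\omega_1}$ means producing such a decomposition via the Shchepin Spectral Theorem together with the fact that $\w(X)=\omega_1$ "at the spectral level". Establishing this, and the stabilization step $\chi(A,\cc(X))\le\omega\Rightarrow q_{\alpha_0}^{-1}(q_{\alpha_0}(A))=\{A\}$, is where the real work is; the rest is bookkeeping with the cited functorial facts.
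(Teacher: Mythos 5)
Your proposal is correct and follows essentially the same route as the paper: Shchepin's characterization of $I^{\omega_1}$ as a character homogeneous absolute retract, combined with a spectral ($\omega_1$-system) representation, Proposition \ref{p:soft}, Lemma \ref{l:3}, and Theorem \ref{t:1}/Lemma \ref{l:op}. The only (harmless) deviation is in the necessity part, where you obtain character homogeneity of $X$ directly from the observation that a point of countable character in $X$ yields a singleton of countable character in $\cc(X)$ --- an argument the paper itself employs in the proof of Theorem \ref{t:cone} --- rather than, as the paper does, reading off the absence of one-point preimages of the bonding maps $p_{\alpha\beta}$ from that of $\cc(p_{\alpha\beta})$ at the spectral level.
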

\begin{proof}
We use the following characterization of the Tychonov cube
$I^\tau$, $\tau>\omega$, due to Shchepin \cite{S}: a compact
Hausdorff space $X$ of weight $\tau>\omega$ is homeomorphic to the
Tychonov cube $I^\tau$ if and only if $X$ is a character
homogeneous  absolute retract. Recall that  a space is called {\it
character homogeneous } if the characters of all of
its points are
equal.

If the weight of $X$ is $\omega_1$, then it easily follows from
the Shchepin Spectral Theorem \cite{S} that $X$ can be represented
as $\varprojlim\mathcal S$, where $\mathcal
S=\{X_\alpha,p_{\alpha\beta};\omega_1\}$ is an inverse system
consisting of convex compact metrizable subsets in locally convex
spaces and affine continuous maps. Since the functor $\cc$ is
continuous (see, e.g. \cite{N}), we obtain that
$\cc(X)=\varprojlim\{\cc(X_\alpha),\cc(p_{\alpha\beta});\omega_1\}$.
Since $\cc(X_\alpha)$ is an absolute retract (see \cite{IZ}) and,
by Proposition \ref{p:soft}, the map $\cc(p_{\alpha\beta})$ is
soft for every $\alpha,\beta<\omega_1$, $\alpha\ge\beta$, we apply
a result of Shchepin (see \cite{S}) to derive that $\cc(X)$ is an
absolute retract.

If $X$ is character homogeneous, then we can in addition assume
that no projection $p_{\alpha\beta}$  possesses one-point
preimages. By Lemma \ref{l:3}, the maps $\cc(p_{\alpha\beta})$ do
not possess one-point preimages and therefore $\cc(X)$ is character
homogeneous. By the mentioned result of Shchepin, $\cc(X)$ is
homeomorphic to $I^{\omega_1}$.

If $\cc(X)$ is homeomorphic to $I^{\omega_1}$, then there exists
 an
inverse system $\mathcal S=\{X_\alpha,p_{\alpha\beta};\omega_1\}$
consisting of compact metrizable convex spaces and open affine
maps such that $\cc(X)=\varprojlim \cc(\mathcal S)$. Applying
Shchepin's Spectral Theorem, we may additionally assume that all
the maps $\cc(p_{\alpha\beta})$, $\beta\le\alpha<\omega_1$, are
soft and do not possess points with one-point preimage. It is then
evident that the maps $p_{\alpha\beta}$,
$\beta\le\alpha<\omega_1$, do not possess points with one-point
preimage. Applying Lemma \ref{l:op} we conclude that the maps
$p_{\alpha\beta}$, $\beta\le\alpha<\omega_1$, are open and
therefore, by the Michael Selection Theorem, soft. Then $X$ is a
character homogeneous AR of weight $\omega_1$. By the cited
characterization theorem for $I^{\omega_1}$, the space $X$ is
homeomorphic to $I^{\omega_1}$.

\end{proof}

\section{Cone over Tychonov cube}

Define the {\it cone functor} $\cone$ in the category $\conv$ as
follows. Given an object $X$ in $\conv$, i.e. a compact convex
subset $X$ in a locally convex space $L$, let $\cone(X)$ be the
convex hull of the set $X\times\{0\}\cup\{(0,1)\}$ in
$L\times\BBR$. For a morphism $f\colon X\to Y$ in $\conv$ define
$\cone(f)\colon \cone(X)\to\cone(Y)$ as the only affine continuous
map that extends $f\times\{0\}\colon X\times\{0\}\to Y\times\{0\}$
and sends $(0,1)\in \cone(X)$ to $(0,1)\in \cone(Y)$.

We will need the following notion. A map $f\colon X\to Y$ is called
a {\em trivial $Q$-bundle} if there exists a homeomorphism $g\colon
X\to Y\times Q$ such that $f=\pr_1g$.
The following statement is a characterization theorem for the space
$\cone(I^{\omega_1})$ among the convex compact spaces.
\begin{prop}\label{p:cone}
A convex compactum $X$ is homeomorphic to the space
$\cone(I^{\omega_1})$ if and only if $X$ satisfies the properties:
\begin{enumerate}
  \item $X$ is an AR;
  \item $\w(X)=\omega_1$; and
  \item there exists a unique point $x\in X$ of countable
  character.
\end{enumerate}
\end{prop}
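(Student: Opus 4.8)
The plan is to prove both implications by reducing the characterization of $\cone(I^{\omega_1})$ to the characterization of $I^{\omega_1}$ via the cone point. First I would establish the forward implication, which is essentially a computation: if $X=\cone(I^{\omega_1})$, then $X$ is an AR (being a cone over an AR, or a compact convex set homeomorphic to the cone, which is contractible and locally contractible in the appropriate sense), its weight equals $\w(I^{\omega_1})=\omega_1$, and the apex $(0,1)$ has countable character because a neighborhood base at the apex in $\cone(I^{\omega_1})$ is obtained from the countably many ``truncation'' levels together with a countable base of the compact metrizable... no — rather, the apex has character $\omega_1$ too in general, so I must argue more carefully: the point of countable character should be a point of the base $I^{\omega_1}$, not the apex. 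Indeed, the cone point of $\cone(I^{\omega_1})$ has the same character as the base, namely $\omega_1$, whereas the apex $(0,1)$ is the point whose complement in $X$ deformation retracts; the unique point of countable character in $\cone(I^{\omega_1})$ must be detected differently. I would therefore pin down precisely which point it is: since $I^{\omega_1}$ is character homogeneous of character $\omega_1$, every point of $X\setminus\{(0,1)\}$ lying on a ray still has character $\omega_1$, and $(0,1)$ has character $\omega_1$ as well unless — and this is the key geometric fact — the apex of a cone over a space of uncountable character can have countable character after all, because small neighborhoods of the apex are ``thin slabs'' and a countable tower of slabs suffices. So the forward direction reduces to verifying $\chi((0,1),\cone(I^{\omega_1}))=\omega$, while $\chi(x,\cone(I^{\omega_1}))=\omega_1$ for all other $x$; uniqueness then follows from character homogeneity of $I^{\omega_1}$.

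For the converse, suppose $X$ is an AR with $\w(X)=\omega_1$ having a unique point $x_0$ of countable character. By Theorem \ref{t:1} (applied with $\cc$ replaced by its consequence, or rather directly by Shchepin's theory), $X$ is openly generated, so $X=\varprojlim\mathcal S$ for a $\sigma$-system $\mathcal S=\{X_\alpha,p_{\alpha\beta};\omega_1\}$ of metrizable convex compacta with open affine bonding maps, and by the Spectral Theorem we may assume the maps are soft and, using that $x_0$ is the only point of countable character, that exactly one ``thread'' gets collapsed. The idea is to split off the apex: set $x_0$ to be the image of a point, consider the ``link'' $L=\overline{X\setminus\{x_0\}}$ or better the natural map realizing $X$ as the cone on something. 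Concretely, since $X$ is a compact convex set with a distinguished point $x_0$ of countable character, I would show the family of ``faces opposite $x_0$'' organizes $X$ as $\cone(Y)$ where $Y$ is a convex compactum that is an AR of weight $\omega_1$ which is character homogeneous — then $Y\cong I^{\omega_1}$ by Shchepin's characterization used in the previous theorem, and $\cone$ being a functor respecting the relevant structure gives $X\cong\cone(Y)\cong\cone(I^{\omega_1})$.

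The honest way to get $Y$ is spectral: choose the spectrum so that $x_0$ corresponds to a thread $(a_\alpha)$ with $a_\alpha$ eventually constant on no tail unless... — more precisely, countable character of $x_0$ means there is a countable $A_0\subset\omega_1$ such that $x_0$ is determined by its coordinate in $X_{A_0}$ in the sense that $p_{\alpha\beta}^{-1}(a_\beta)=\{a_\alpha\}$ for all $\alpha\ge\beta\ge A_0$; after reindexing, $a_\alpha$ is a one-point preimage point for every bonding map. Dually, at $x_0$ each $X_\alpha$ looks like a cone with apex $a_\alpha$ and the bonding maps are cone maps; passing to the ``bases'' $X_\alpha=\cone(Y_\alpha)$ with affine open bonding maps $\cone(q_{\alpha\beta})$ forces $q_{\alpha\beta}$ open (restriction of an open cone map to a complementary face), metrizable, hence by Michael selection soft, and with no one-point preimages (that is exactly where character homogeneity of $Y$ comes from: any point of $Y$ of countable character would pull back to a second point of countable character in $X$, contradicting uniqueness). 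So $Y:=\varprojlim\{Y_\alpha,q_{\alpha\beta};\omega_1\}$ is a character homogeneous AR of weight $\omega_1$, hence $Y\cong I^{\omega_1}$, and $X\cong\cone(Y)\cong\cone(I^{\omega_1})$.

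I expect the main obstacle to be the spectral bookkeeping that shows a compact convex set with a unique point of countable character is, canonically and compatibly through the spectrum, a cone: one must produce the ``base'' $Y_\alpha$ inside each $X_\alpha$ (e.g. as $\overline{\mathrm{co}}$ of the union of all faces not containing $a_\alpha$, or via a linear functional separating $a_\alpha$) and verify that the open bonding maps $p_{\alpha\beta}$ restrict to open maps $Y_\alpha\to Y_\beta$ and that $p_{\alpha\beta}=\cone(q_{\alpha\beta})$ up to the natural identification. Once the cone structure is transported through the spectrum, the rest is a direct appeal to Shchepin's characterization of $I^{\omega_1}$ exactly as in the proof of the previous theorem. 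The forward direction, by contrast, is routine once one correctly identifies that it is the apex $(0,1)$ that has countable character — which I would verify by exhibiting the explicit countable neighborhood base of thin slabs $\{(y,t):t>1-1/n\}$.
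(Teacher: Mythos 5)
Your forward direction, after the detour, lands on the correct statement (the apex is the unique point of countable character, via the slab neighborhoods $\{[y,t]:t>1-1/n\}$, which form a base by compactness of $I^{\omega_1}$), and your converse has the right general shape: represent $X$ as the limit of an $\omega_1$-spectrum of metrizable convex compacta with open affine bonding maps in which exactly one point $x_\alpha$ of each $X_\alpha$ has a one-point preimage. But the step you yourself flag as ``the main obstacle'' is a genuine gap, and the affine route you sketch for it does not work. Condition (3) is purely topological, so it gives no affine structure at $x_0$: the points $x_\alpha$ need not be extreme, let alone exposed, in $X_\alpha$, so there is in general no functional separating $x_\alpha$ and no ``face opposite $x_\alpha$'' realizing $X_\alpha$ as an affine cone $\cone(Y_\alpha)$ with apex $x_\alpha$; and even when some $X_\alpha$ happens to be an affine cone, an affine bonding map sending apex to apex need not send any chosen base into a base (cross-sections of a cone are highly non-unique), so the compatibility $p_{\alpha\beta}=\cone(q_{\alpha\beta})$ through the whole spectrum cannot be arranged this way. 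Note also that the conclusion of the proposition is only a homeomorphism, not an affine isomorphism, so no argument that tries to read off a convex base from the convex structure at $x_0$ can be expected to succeed.

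The paper resolves exactly this point with Hilbert cube manifold machinery rather than convexity: after refining the spectrum so that over every compactum $K\subset X_\alpha\setminus\{x_\alpha\}$ the restriction of $p_{\alpha+1,\alpha}$ has the fibrewise disjoint approximation property, the Toru\'nczyk--West characterization makes these restrictions trivial $Q$-bundles; since $X_\alpha\setminus\{x_\alpha\}$ is homeomorphic to $Q\times[0,1)$ (Keller's theorem plus $Q\cong\cone(Q)$), Chapman's theorem upgrades the locally trivial $Q$-bundle over $X_\alpha\setminus\{x_\alpha\}$ to a trivial one, so $p_{\alpha+1,\alpha}$ restricted over the complement of $x_\alpha$ is homeomorphic to $\pr_{23}\colon Q\times Q\times[0,1)\to Q\times[0,1)$; passing to one-point compactifications identifies $p_{\alpha+1,\alpha}$ with $\cone(\pr_2)\colon\cone(Q\times Q)\to\cone(Q)$, and the limit of this system is $\cone(I^{\omega_1})$. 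So the cone structure is produced topologically on the level of the bonding maps, not by splitting off an affine base; without an argument of this kind (or some substitute for the Toru\'nczyk--West/Chapman input), your proof of the converse is incomplete.
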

\begin{proof} Obviously, if a convex compactum $X$ is homeomorphic
to $\cone(I^{\omega_1})$, then $X$ satisfies properties 1)--3).

Suppose now that $X$ satisfies 1)--3). Then $X$ is homeomorphic to
the limit of a continuous inverse system $\mathcal
S=\{X_\alpha,p_{\alpha\beta};\omega_1\}$ in $\conv$ that satisfies
the properties
\begin{enumerate}
  \item[(i)] $X_\alpha$ is a convex metrizable compactum for every
  $\alpha$;
  \item[(ii)] $p_{\alpha\beta}$ is an open affine map for every
  $\alpha\ge\beta$; and
  \item[(iii)] $\{x_\beta\}=\{y\in X_\beta\mid\
|p_{\alpha\beta}^{-1}(y)|=1\}$.
\end{enumerate}
Passing, if necessary, to a subsystem of $\mathcal S$, one can
assume that for every $\alpha$ and every compact subset $K$ of
$X_\alpha\setminus\{x_\alpha\}$ the map
$$p_{\alpha+1,\alpha}|p_{\alpha+1,\alpha}^{-1}(K)\colon
p_{\alpha+1,\alpha}^{-1}(K)\to K$$ satisfies the condition of
fibrewise disjoint approximation. The Toru\'nczyk-West
characterization theorem \cite{TW} implies that, if $K$ is an AR,
the map $p_{\alpha+1,\alpha}|p_{\alpha+1,\alpha}^{-1}(K)$ is a
trivial $Q$-bundle and therefore the map
$$p_{\alpha+1,\alpha}|p_{\alpha+1,\alpha}^{-1}
(X_\alpha\setminus\{x_\alpha\})=p_{\alpha+1,\alpha}|
(X_{\alpha+1}\setminus\{x_{\alpha+1}\}),$$ being a locally trivial
$Q$-bundle, is a trivial $Q$-bundle (see \cite{C1}). Therefore,
the map $p_{\alpha+1,\alpha}$ is homeomorphic to the projection
map $\pr_{23}\colon Q\times Q\times [0,1)\to Q\times [0,1)$ (that
$X_\alpha\setminus\{x_\alpha\}$ is homeomorphic to $Q\times [0,1)$
follows from the fact that the spaces $Q$ and $\cone(Q)$ are
homeomorphic --  see \cite{C}). Passing to the one-point
compactifications of these maps we obtain the commutative diagram
$$\xymatrix{X_{\alpha+1}\ar[ddd]_{p_{\alpha+1,\alpha}}\ar[rrr]&
& &\cone(Q\times Q)\ar[ddd]^{\cone(\pr_2)}\\
&X_{\alpha+1}\setminus\{x_{\alpha+1}\}\ar[r]\ar@{^{(}->}[lu]
\ar[d]_{p_{\alpha+1,\alpha}|\dots}&Q\times
Q\times[0,1)\ar[d]^{\pr_{23}}\ar@{^{(}->}[ur]&
\\ &X_{\alpha}\setminus\{x_{\alpha\}}\ar@{^{(}->}[dl]\ar[r]&Q\times[0,1)\ar@{^{(}->}[dr]&\\
 X_{\alpha}\ar[rrr]& & & \cone(Q)}$$
in which the horizontal arrows are homeomorphisms. Therefore $X$
and $\cone(I^{\omega_1})$ are homeomorphic.
\end{proof}

\begin{thm}\label{t:cone}
Let $X$ be an object of the category $\conv$. Then the space $\cc(X)$
is homeomorphic to the cone over the Tychonov cube,
$\cone(I^{\omega_1})$, if and only if $X$ is homeomorphic to the
space $\cone(I^{\omega_1})$.
\end{thm}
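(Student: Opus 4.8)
\textbf{Proof proposal for Theorem \ref{t:cone}.}
The plan is to mirror the two-sided argument used for Theorem \ref{t:1} and for the $I^{\omega_1}$ case, but now checking the three defining properties of $\cone(I^{\omega_1})$ supplied by Proposition \ref{p:cone}. One direction is essentially formal: if $X$ is homeomorphic to $\cone(I^{\omega_1})$, then $X$ is a convex AR of weight $\omega_1$, hence by Theorem \ref{t:1} the space $\cc(X)$ is an AR, and since the functor $\cc$ commutes with limits of inverse $\sigma$-systems, $\w(\cc(X))\le\omega_1$; combined with the fact that $\cc(X)$ contains an affine copy of $X$ (via singletons) and is nonmetrizable it has weight exactly $\omega_1$. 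It then remains to show $\cc(X)$ has a unique point of countable character, and apply Proposition \ref{p:cone}.

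For the character count the natural candidate for the unique point of countable character in $\cc(X)$ is the singleton $\{x_0\}$, where $x_0$ is the unique point of countable character in $X\cong\cone(I^{\omega_1})$ (the cone vertex). First I would verify that $\{x_0\}$ indeed has countable character in $\cc(X)$: representing $X=\varprojlim\{X_\alpha,p_{\alpha\beta};\omega_1\}$ with open affine bonding maps and with $p_{\alpha\beta}^{-1}$ one-point exactly over the vertex, the preimage $\cc(p_{\alpha0})^{-1}(\{x_0\})$ is a single point $\{x_0\}$ in $\cc(X_\alpha)$ for the initial metrizable $X_0$, and one checks that the basic Vietoris neighborhoods pulled back from $\cc(X_0)$ already form a countable base at $\{x_0\}$. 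Conversely, I would show every other $A\in\cc(X)$ has character $\omega_1$: if $A\neq\{x_0\}$ then either $A$ is a nondegenerate continuum or a singleton $\{x\}$ with $x\neq x_0$; in either case $A$ contains a point $y\neq x_0$, so $y$ has character $\omega_1$ in $X$, and using openness of the bonding maps (so that $\cc(p_{\alpha\beta})$ is soft, by Proposition \ref{p:soft}) together with Lemma \ref{l:3} one produces $\omega_1$-many "independent coordinates" along which $A$ can be perturbed, giving $\chi(A,\cc(X))\ge\omega_1$. Then Proposition \ref{p:cone} applies and yields $\cc(X)\cong\cone(I^{\omega_1})$.

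For the reverse implication, suppose $\cc(X)\cong\cone(I^{\omega_1})$. Then $\cc(X)$ is an AR, so by Theorem \ref{t:1} the space $X$ is openly generated of weight $\le\omega_1$; it is nonmetrizable (else $\cc(X)$ would be metrizable), hence $\w(X)=\omega_1$. Representing $X=\varprojlim\{X_\alpha,p_{\alpha\beta};\omega_1\}$ with metrizable convex $X_\alpha$ and, by the Spectral Theorem, with all $\cc(p_{\alpha\beta})$ soft, Lemma \ref{l:op} gives that the $p_{\alpha\beta}$ are open; so $X$ is a convex AR of weight $\omega_1$. It remains to locate a \emph{unique} point of countable character in $X$. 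I would argue: if $X$ had two points $x\neq x'$ of countable character, or none, then by the spectral representation one could arrange the system so that the set of $y$ with $|p_{\alpha+1,\alpha}^{-1}(y)|=1$ is not a single point; invoking Lemma \ref{l:3}, $\cc(p_{\alpha+1,\alpha})$ inherits the corresponding non-uniqueness (or absence) of one-point preimages, which would force $\cc(X)$ to have, respectively, two points of countable character or none --- contradicting $\cc(X)\cong\cone(I^{\omega_1})$, which has exactly one by Proposition \ref{p:cone}. Hence $X$ has a unique point of countable character, and Proposition \ref{p:cone} gives $X\cong\cone(I^{\omega_1})$.

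The main obstacle I expect is the careful bookkeeping in the character computation, specifically showing that \emph{only} the cone vertex singleton $\{x_0\}$ has countable character in $\cc(X)$ and transferring the uniqueness statement back and forth across the functor $\cc$. The soft-map machinery (Proposition \ref{p:soft}, Lemmas \ref{l:3} and \ref{l:op}) handles openness and non-uniqueness of preimages cleanly, but pinning down the \emph{exact} fiber over which $\cc(p_{\alpha+1,\alpha})$ is one-to-one --- and ruling out that convex hulls create spurious points of small character --- will require a genuine argument using exposed points, much in the spirit of the proof of Lemma \ref{l:3}. Everything else is a recombination of the techniques already developed for Theorem \ref{t:1}.
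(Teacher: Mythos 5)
Your proposal is correct and takes essentially the same approach as the paper: both directions reduce to the characterization in Proposition \ref{p:cone} via Theorem \ref{t:1}, a spectral representation whose bonding maps have one-point fibres only over the cone vertex, and a Lemma \ref{l:3}-type exposed-point argument showing that $\cc(p_{\alpha\beta})$ has a one-point preimage exactly over $\{x_\beta\}$, i.e. $\{\{x_\beta\}\}=\{A\in\cc(X_\beta)\mid |\cc(p_{\alpha\beta})^{-1}(A)|=1\}$, which is precisely the step the paper dispatches with ``similarly as in the proof of Lemma \ref{l:3}''. For the converse the paper is even terser --- it only notes that $x\mapsto\{x\}$ preserves countable character --- so your slightly more detailed spectral argument there (in particular ruling out the case of \emph{no} point of countable character) is the same idea, carried out a bit more carefully.
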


\begin{proof} Suppose that a convex compact space $X$ is an
absolute retract of weight $\omega_1$ with exactly one point $x$,
of countable character. It follows from the Shchepin Spectral
Theorem (\cite{S}; see also \cite{Ch}) that $X$ can be represented
as $\varprojlim\mathcal S$, where $\mathcal
S=\{X_\alpha,p_{\alpha\beta};\omega_1\}$ is an inverse system in
which every $X_\alpha$ is a metrizable convex compactum and every
$p_{\alpha\beta}$, $\alpha\ge\beta$, is an affine map. Denote by
$p_\alpha\colon X\to X_\alpha$ the limit projections and let
$x_\alpha=p_\alpha(x)$. Passing, if necessary, to a subsystem of
$\mathcal S$, one can assume additionally that for every
$\alpha\ge\beta$ we have $\{x_\beta\}=\{y\in X_\beta\mid\
|p_{\alpha\beta}^{-1}(y)|=1\}$.

Then for every $\alpha\ge\beta$, the map $\cc(p_{\alpha\beta})$ is
a soft map and similarly as in the proof of Lemma \ref{l:3}, one
can show that $$\{\{x_\beta\}\}=\{A\in \cc(X_\beta)\mid\
|\cc(p_{\alpha\beta})^{-1}(A)|=1\}.$$ We conclude that the space
$\cc(X)=\varprojlim(\mathcal{S})$ satisfies the conditions of
Proposition \ref{p:cone} and therefore is homeomorphic to the
space $\cone(I^{\omega_1})$.

Now, if $\cc(X)$ is homeomorphic to $\cone(I^{\omega_1})$, it
follows from Theorem \ref{t:1} that $X$ is an AR of weight
$\omega_1$. Note that for every point $x$ of countable character
in $X$, the point $\{x\}$ is of countable character in $\cc(X)$.
We therefore conclude that there is a unique point of countable
character in $X$. By Proposition \ref{p:cone}, $X$ is homeomorphic
to $\cone(I^{\omega_1})$.
\end{proof}
\section{Remarks and open problems}
\begin{problem} Let $f\colon X\to Y$ be an affine continuous map
of compact metrizable compacta in locally convex spaces such that
$\dim f^{-1}(y)\ge2$, for every $y\in Y$. Is the map $\cc(f)\colon
\cc(X)\to\cc(Y)$ homeomorphic to the projection map
$\pr_1\colon Q\times Q\to
Q$?
\end{problem}

Note that there is an open map $f\colon X\to Y$ of metrizable
compacta with infinite fibers such that the map $P(f)\colon P(X)\to
P(Y)$ is not homeomorphic to the projection map $\pr_1\colon
Q\times Q\to Q$ (see \cite{D}). (Recall that $P$ denotes the
probability measure functor).
\begin{problem}
Does every compact convex AR of weight $\tau\ge\omega_1$ contain an
affine copy of the Tychonov cube $I^\tau$?
\end{problem}

It is known that every compact Hausdorff AR of weight
$\tau\ge\omega_1$ contains a topological copy of the Tychonov cube
$I^\tau$ (see \cite{S1}).

The theory of nonmetrizable noncompact absolute extensors which
is, in some sense, parallel to that of compact absolute extensors,
was elaborated by Chigogidze \cite{Ch}\cite{Ch1}. One can also consider
the hyperspaces of compact subsets in the spaces $\mathbb R^\tau$
and conjecture that, for noncountable $\tau$, the hyperspace
$\cc(\mathbb R^\tau)$ is homeomorphic to $\mathbb R^\tau$ if and
only if $\tau=\omega_1$.

\end{document}